\numberwithin{equation}{section}
\numberwithin{figure}{section}
\theoremstyle{plain}
\newtheorem{thm}{\protect\theoremname}
\theoremstyle{plain}
\newtheorem{lem}[thm]{\protect\lemmaname}
\theoremstyle{remark}
\newtheorem{rem}[thm]{\protect\remarkname}
\theoremstyle{corollary}
\newtheorem{cor}[thm]{\protect\corollaryname}
\numberwithin{thm}{section}
\providecommand{\lemmaname}{Lemma}
\providecommand{\remarkname}{Remark}
\providecommand{\theoremname}{Theorem}
\providecommand{\corollaryname}{Corollary}
\begin{document}
\title[Superconvergence for arbitrary triangular arrays]{Superconvergence in free probability limit theorems for arbitrary triangular arrays}
\author{Hari Bercovici, Ching-Wei Ho, Jiun-Chau Wang, and Ping Zhong}
\begin{abstract}
It is known that limit theorems for triangular arrays with identically distributed rows yields convergence of densities rather than just convergence in distribution. We show that this superconvergence result holds---at least at points at which the limit density is nonzero---even if the rows of the array are not identically distributed.
\end{abstract}

\date{\today}
\subjclass[2010]{46L54}
\keywords{free convolution, superconvergence}
\address{Hari Bercovici: Department of Mathematics, Indiana University, Bloomington,
IN 47405, United States }
\email{bercovic@indiana.edu}
\address{Ching-Wei Ho: Institute of Mathematics, Academia Sinica, Taipei 10617, Taiwan; Department of Mathematics, University of Notre Dame, Notre Dame,
IN 46556, United States}
\email{cho2@nd.edu}
\address{Jiun-Chau Wang: Department of Mathematics and Statistics, University
of Saskatchewan, Saskatoon, S7N 5E6, Canada }
\email{jcwang@math.usask.ca}
\address{Ping Zhong: Department of Mathematics and Statistics, University of
Wyoming, Laramie, WY 82071, United States}
\email{pzhong@uwyo.edu}

\maketitle

\section{Introduction \label{sec:Introduction}}

The central limit theorem of free probability \textbf{\cite{V-1983}}
asserts that for a sequence of freely independent,
identically distributed random variables $X_{1},X_{2},\dots$ with zero expected value
and unit variance, the variables 
\[
Y_{n}=\frac{X_{1}+\cdots+X_{n}}{\sqrt{n}}
\]
converge in distribution to the standard semicircular law with density
\[
\frac{1}{2\pi}\sqrt{4-t^{2}}
\]
for $t\in[-2,2]$. In case the variables $X_{n}$ are also bounded,
it was observed in \textbf{\cite{BVsuper}} that the distribution
of $Y_{n}$ is necessarily absolutely continuous for large $n$, and
that the densities of these distributions (along with their derivatives
of all orders) converge uniformly to the semicircular density on every
interval $[a,b]\subset(-2,2)$. Moreover, the distribution of $Y_{n}$
is supported on an interval $[c_{n},d_{n}]$ such that $\lim_{n\to\infty}c_{n}=-2$
and $\lim_{n\to\infty}d_{n}=2$. This phenomenon was called \emph{superconvergence}
in \textbf{\cite{BVsuper}}.  Chistyakov and G\"otze computed an asymptotic expansion of the free central limit theorem \cite{CG-PTRF} and the rate of convergence of the density \cite{CG-arXiv}. Various aspects of superconvergence were extended
to more general limit processes for sums as well as for products of
\emph{infinitesimal arrays} of random variables, generally under the
assumption that the rows of the arrays are formed of freely independent,
identically distributed variables (see \cite{AWZ,BWZ-Pacific,BWZ-mul, K1,K2}).
The purpose of this note is to remove the identical distribution hypothesis
on the arrays. The technique is closer to the one employed in \textbf{\cite{BVsuper}}
than to the more recent developments and it essentially uses only
the fact that the $R$-transforms of the variables in an infinitesimal
array are defined on arbitrarily large Stolz angles. In particular,
the subordination property of free convolution is not needed in this
paper. The results do not even require that we work with a convolution
of infinitesimal variables, but they only apply locally to the points
that have a neighborhood where the limit distribution is absolutely
continuous. We provide complete arguments for the additive case. The
arguments for products are very similar and only the relevant differences
are pointed out.

\section{Additive convolution and superconvergence\label{sec:Additive-convolution-and}}

The distribution of a sum of freely independent selfadjoint random
variables is the \emph{free additive convolution} of the distributions
of the summands. Because of this fact, we can forget about the variables
themselves and focus instead on free convolutions. Suppose that
$\mu$ is a Borel probability measure on $\mathbb{R}$, and define
two analytic functions on $\mathbb{H}=\{z\in\mathbb{C}:\Im z>0\}$
by
\[
G_{\mu}(z)=\int_{\mathbb{R}}\frac{d\mu(t)}{z-t},\quad F_{\mu}(z)=\frac{1}{G_{\mu}(z)},\quad z\in\mathbb{H}.
\]
The measure $\mu$ is uniquely determined by either of these functions,
and the Stieltjes inversion formula shows that the value of the density
of $\mu$ at almost every $x\in\mathbb{R}$ is equal to the boundary
limit $-\pi^{-1}\lim_{y\downarrow0}\Im G_{\mu}(x+iy)$. The function
$F_{\mu}$ satisfies
\begin{equation}
\lim_{z\to\infty}\frac{F_{\mu}(z)}{z}=1,\label{eq:F(z)/z}
\end{equation}
where the limit is taken as $z=x+iy\to\infty$ such that $|x|/y$
remains bounded. Given $\alpha,\beta>0$, define the Stolz angle at infinity by 
\[
\Gamma_{\alpha,\beta}=\{x+iy\in\mathbb{C}:y\ge\max\{\beta,\alpha|x|\}\}.
\]
Then (\ref{eq:F(z)/z}) implies that for every $\alpha>0$, there exist
$\beta>0$ and an analytic function $H_{\mu}:\Gamma_{\alpha,\beta}\to\mathbb{H}$
such that 
\[
F_{\mu}(H_{\mu}(z))=z,\quad z\in\Gamma_{\alpha,\beta};
\]
see \cite{BVunbdd}. Since $\Im F_{\mu}(z)\ge\Im z$ for $z\in\mathbb{H},$
it follows that the function
\[
\varphi_{\mu}(z)=H_{\mu}(z)-z
\]
satisfies
\begin{equation}
\Im\varphi_{\mu}(z)\le0,\quad z\in\Gamma_{\alpha,\beta}.\label{eq:phi has negative im}
\end{equation}
The function $\varphi_{\mu}$ may have analytic continuations to other
regions of the form $\Gamma_{\alpha',\beta'}$ that still satisfy
(\ref{eq:phi has negative im}) but possibly $\Im(z+\varphi_{\mu}(z))\le0$
for some $z\in\Gamma_{\alpha',\beta'}$, so the quantity $F_{\mu}(z+\varphi_{\mu}(z))$
is not defined. Any two such analytic continuations coincide on their
common domain of definition, and we continue denoting by $\varphi_{\mu}$
the function obtained by assembling all of these possible analytic
continuations to domains of the form $\Gamma_{\alpha',\beta'}$. Thus,
if $\varphi_{\mu}$ is defined at a point $z=x+iy\in\mathbb{H}$,
then it is also defined at all points $x'+iy'$ such that $y'\ge y$
and $|x'|/y'\le|x|/y$. The equation
\[
F_{\mu}(z+\varphi_{\mu}(z))=z
\]
persists for $z$ in every connected open set $U$ such that $U\subset\Gamma_{\alpha',\beta'}$
for some $\alpha',\beta'>0$ and $z+\varphi_{\mu}(z)\in\mathbb{H}$
for every $z\in U$. The related identity
\begin{equation}
F_{\mu}(z)+\varphi_{\mu}(F_{\mu}(z))=z\label{eq:H(F)=00003Dz}
\end{equation}
holds in every connected open set $V\subset\mathbb{H}$ with the property
that $F_{\mu}(z)$ is in the domain of $\varphi_{\mu}$ for $z\in V$.

The function $\varphi_{\mu}$ defined above is known as the \emph{Voiculescu
transform} of $\mu$ and it serves to linearize the free additive
convolution of probability measures on $\mathbb{R}$; that is
\[
\varphi_{\mu_{1}\boxplus\mu_{2}}=\varphi_{\mu_{1}}+\varphi_{\mu_{2}}
\]
on the common domain of $\varphi_{\mu_{1}}$ and $\varphi_{\mu_{2}}$.
We also recall that a measure $\mu$ is $\boxplus$-\emph{infinitely
divisible} if and only if $\varphi_{\mu}$ is defined on the entire
$\mathbb{H}$ and $\varphi_{\mu}(\mathbb{H})\subset\mathbb{C}\backslash\mathbb{H}$
\cite{BVunbdd}. If $\mu$ is $\boxplus$-infinitely divisible, the
identity (\ref{eq:H(F)=00003Dz}) holds for for every $z\in\mathbb{H}$
and thus $F_{\mu}$ maps $\mathbb{H}$ conformally onto a domain $\Omega_{\mu}\subset\mathbb{H}.$
It was shown in \cite{BVunbdd} that 
\[
\Omega_{\mu}=\{z\in\mathbb{H}:z+\varphi_{\mu}(z)\in\mathbb{H}\}.
\]
In fact, there exists a continuous function $f_{\mu}:\mathbb{R}\to[0,+\infty)$
such that
\[
\Omega_{\mu}=\{x+iy:x\in\mathbb{R},y>f_{\mu}(x)\},
\]
and the map $x\mapsto H_{\mu}(x+if_{\mu}(x))$ (extended by continuity
when $f_{\mu}(x)=0$) is a homeomorphism of $\mathbb{R}$ onto $\mathbb{R}$,
see\textbf{ \cite{Huang}} or \cite[Section 2]{BWZ-Pacific}. Moreover,
the function $f_{\mu}$ is real-analytic wherever it is nonzero. If
$f_{\mu}>0$ on an interval $[A,B]$ then $\mu$ has a real-analytic
density on the interval $[a,b]$, where 
\[
a=H_{\mu}(A+if_{\mu}(A)),\quad b=H_{\mu}(B+if_{\mu}(B)),
\]
and the density at a point $t=H_{\mu}(s+if_{\mu}(s))$, $s\in[A,B]$,
is given by
\[
-\frac{1}{\pi}\Im G_{\mu}(t)=-\frac{1}{\pi}\Im\frac{1}{s+if_{\mu}(s)}=\frac{1}{\pi}\frac{f_{\mu}(s)}{s^{2}+f_{\mu}(s)^{2}}.
\]
Every interval $[a,b]$ on which $\mu$ has a positive density is
of the form described above. The following fact is used in the proof
of the main result.
\begin{lem}
\label{lem:y-derivative>0} Suppose that $\mu$ is a $\boxplus$-infinitely
divisible measure on $\mathbb{R},$ and let $s\in\mathbb{R}$ be such
that $f_{\mu}(s)>0$. Then
\[
\frac{\partial}{\partial t}\Im H_{\mu}(s+it)>0\text{ for }t\ge f_{\mu}(s).
\]
\end{lem}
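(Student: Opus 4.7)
The plan is to reduce the statement to an explicit computation using the classical Nevanlinna-type representation of the Voiculescu transform of a $\boxplus$-infinitely divisible measure (see \cite{BVunbdd}). Concretely, there exist $\gamma \in \mathbb{R}$ and a finite positive Borel measure $\sigma$ on $\mathbb{R}$ such that
\[
\varphi_{\mu}(z) = \gamma + \int_{\mathbb{R}} \frac{1+uz}{z-u}\,d\sigma(u), \qquad z\in\mathbb{H}.
\]
Taking imaginary parts at $z = x+iy$ with $y>0$ and setting
\[
J(x,y) := \int_{\mathbb{R}} \frac{1+u^2}{(x-u)^2+y^2}\,d\sigma(u),
\]
one obtains $\Im H_{\mu}(x+iy) = y\bigl(1-J(x,y)\bigr)$. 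The integral $J(x,y)$ is finite because its integrand is bounded (it tends to $1$ as $|u|\to\infty$) and $\sigma$ is finite.

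Differentiating in $y$ under the integral sign, which is justified by dominated convergence, then gives
\[
\frac{\partial}{\partial y}\Im H_{\mu}(x+iy) = \bigl(1-J(x,y)\bigr) + 2y^2 \int_{\mathbb{R}} \frac{1+u^2}{\bigl((x-u)^2+y^2\bigr)^2}\,d\sigma(u).
\]
Specializing to $x = s$ and $y = t \geq f_{\mu}(s)$, the first summand is non-negative: the identity $\Im H_{\mu}(s+if_{\mu}(s)) = 0$ recalled in the excerpt translates to $J(s,f_{\mu}(s)) = 1$, and since $y \mapsto J(s,y)$ is manifestly strictly decreasing on $(0,\infty)$, we have $J(s,t) \leq 1$ for every $t \geq f_{\mu}(s)$.

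The key point for strict positivity is then the second summand, which is strictly positive as long as $\sigma$ is not the zero measure and $t > 0$. Both conditions hold in our setup: $t \geq f_{\mu}(s) > 0$ is part of the hypothesis, and $\sigma \equiv 0$ would force $\varphi_{\mu}$ to be constant and hence $f_{\mu} \equiv 0$, contradicting $f_{\mu}(s) > 0$. Adding the two summands therefore yields a strictly positive derivative at every point of the ray, including at the endpoint $t = f_{\mu}(s)$ where the first summand vanishes. The whole argument hinges on invoking the integral representation of $\varphi_{\mu}$; once that is in hand the computation is essentially mechanical, and I do not foresee a serious technical obstacle.
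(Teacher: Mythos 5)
Your proof is correct and starts from the same ingredient as the paper's: the Nevanlinna representation of $\varphi_{\mu}$ for a $\boxplus$-infinitely divisible measure. Where you diverge is in how the positivity is extracted. The paper invokes the Cauchy--Riemann equation to reduce the claim to $\Re H'_{\mu}(s+it)>0$, writes $\Re H'_{\mu}=1-\Re\int_{\mathbb{R}}\frac{1+x^{2}}{(s+it-x)^{2}}\,d\sigma(x)$, and then bounds the integral through the chain
\[
\Re\int\frac{1+x^{2}}{(s+it-x)^{2}}\,d\sigma<\left|\int\frac{1+x^{2}}{(s+it-x)^{2}}\,d\sigma\right|\le\int\frac{1+x^{2}}{|s+it-x|^{2}}\,d\sigma\le1.
\]
You instead differentiate $\Im H_{\mu}(s+it)=t\bigl(1-J(s,t)\bigr)$ directly in $t$ and arrive at the split
\[
\frac{\partial}{\partial t}\Im H_{\mu}(s+it)=\bigl(1-J(s,t)\bigr)+2t^{2}\int_{\mathbb{R}}\frac{1+u^{2}}{\bigl((s-u)^{2}+t^{2}\bigr)^{2}}\,d\sigma(u),
\]
with the first term nonnegative for $t\ge f_{\mu}(s)$ (since $J(s,f_{\mu}(s))=1$ and $J(s,\cdot)$ is decreasing) and the second strictly positive because $t>0$ and $\sigma\ne0$. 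The two expressions are of course identical after expanding $\Re\frac{1}{(s+it-x)^{2}}$, so this is the same computation rearranged; but your decomposition isolates the source of strict positivity in a single manifestly positive term and does not lean on the strictness of the first inequality $\Re(\cdot)<|\cdot|$ above, which, as stated, tacitly requires knowing that the integral is not a nonnegative real number. You also dispose of the degenerate case by observing that $\sigma=0$ forces $f_{\mu}\equiv0$, which is the same point the paper makes by treating point masses separately. Overall this is a slightly tidier packaging of the same argument.
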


\begin{proof} When $\mu$ is a point mass, we have
 $	\Im H_{\mu}(s+it)=t$, so the result is immediate.  Suppose therefore that $\mu$ is not a point mass and write the function $\varphi_{\mu}$ in its Nevanlina representation
\[
\varphi_{\mu}(z)=c+\int_{\mathbb{R}}\frac{1+zx}{z-x}\,d\sigma(x),
\]
where $c$ is a real constant and $\sigma$ is a nonzero, finite, positive
Borel measure on $\mathbb{R}$.  By the Cauchy-Riemann equations, it suffices
to show that
\[
\Re H'_{\mu}(z)>0\text{ for }z=s+it,\,t\ge f_{\mu}(s).
\]
Easy calculations show that
\[
H'_{\mu}(s+it)=1-\int_{\mathbb{R}}\frac{1+x^{2}}{(s+it-x)^{2}}d\sigma(x),
\]
and
\[
\Im H_{\mu}(s+it)=t\left[1-\int_{\mathbb{R}}\frac{1+x^{2}}{|s+it-x|^{2}}d\sigma(x)\right].
\]
The fact that $f_{\mu}(s)>0$ means that
\[
\int_{\mathbb{R}}\frac{1+x^{2}}{|s+it-x|^{2}}d\sigma(x)=1,\quad t=f_{\mu}(s),
\]
and thus
\[
\int_{\mathbb{R}}\frac{1+x^{2}}{|s+it-x|^{2}}d\sigma(x)\le1,\quad t\ge f_{\mu}(s).
\]
 On the other hand, one has 
\[
\Re\int_{\mathbb{R}}\frac{1+x^{2}}{(s+it-x)^{2}}d\sigma(x)<\left|\int_{\mathbb{R}}\frac{1+x^{2}}{(s+it-x)^{2}}d\sigma(x)\right|\le\int_{\mathbb{R}}\frac{1+x^{2}}{|s+it-x|^{2}}d\sigma(x)\le1
\]
for $t\ge f_{\mu}(s)$, and this implies the desired inequality $\Re H'_{\mu}(z)>0$.
\end{proof}
\begin{rem}
\label{rem:y-derivative again}In the above proof, we were able to
use an explicit formula for the derivative. Calculations are more
cumbersome for free multiplicative convolutions and therefore the
following general fact will be useful. Suppose that $\alpha\in\mathbb{R}$
and $\varepsilon,\delta>0$, and that $f:(\alpha-\varepsilon,\alpha+\varepsilon)\to\mathbb{R}$
is a differentiable function. Set 
\[
D=\{x+iy:x\in(\alpha-\varepsilon,\alpha+\varepsilon)\text{ and }|y-f(x)|<\delta\},
\]
and let $H:D\to\mathbb{C}$ be an analytic function such that
\[
\Im H(x+iy)\begin{cases}
>0, & y>f(x),\\
<0, & y<f(x).
\end{cases}
\]
Then
\begin{equation}
\frac{\partial}{\partial y}\Im H(x+iy)>0\text{ for }y=f(x),\:x\in(\alpha-\varepsilon,\alpha+\varepsilon).\label{eq:42}
\end{equation}
To see this, we observe that the function $H(x+if(x))$ is necessarily
real and increasing on $(\alpha-\varepsilon,\alpha+\varepsilon)$,
and thus
\[
0\le\frac{d}{dx}H(x+if(x))=H'(x+if(x))(1+if'(x)).
\]
 Writing $H'(x+if(x))=a+ib$ with $a,b\in\mathbb{R}$, we see that
$b=-af'(x)$ and
\[
\frac{d}{dx}H(x+if(x))=a-bf'(x)=a(1+f'(x)^{2}).
\]
Since it is easily seen that $H'(x+if(x))\ne0$ (otherwise, the function
$H(x+if(x))$ behaves locally as a power function, and $H(x+if(x))$
cannot be real on $(\alpha-\varepsilon,\alpha+\varepsilon)$), we
conclude that $a>0$, and (\ref{eq:42}) follows from the Cauchy--Riemann
equations.
\end{rem}

Weak convergence is easily described in terms of the functions $F_{\mu}$
or $\varphi_{\mu}$. If a sequence $\{\mu_{n}\}_{n\in\mathbb{N}}$
of Borel probability measures is tight, then there exist $\alpha,\beta>0$
such that $\Gamma_{\alpha,\beta}$ is contained in the domain of $\varphi_{\mu_{n}}$
for every $n\in\mathbb{N}$. Moreover, the weak convergence of $\{\mu_{n}\}_{n\in\mathbb{N}}$
to $\mu$ is equivalent to the local uniform convergence of $\varphi_{\mu_{n}}$
to $\varphi_{\mu}$ on $\Gamma_{\alpha,\beta}$, as well as to the
local uniform convergence of $F_{\mu_{n}}$ to $F_{\mu}$ on $\mathbb{H}$.
See \textbf{\cite{BVunbdd}} for the proofs of these results.

Suppose now that $k_{1},k_{2},\dots\in\mathbb{N}$, and that $\{\mu_{n,i}:n,i\in\mathbb{N},i\le k_{n}\}$
is an array of Borel probability measures on $\mathbb{R}$. This array
is said to be \emph{infinitesimal} if 
\[
\lim_{n\to\infty}\min_{1\le i\le k_{n}}\mu_{n,i}\left((-\varepsilon,\varepsilon)\right)=1
\]
for every $\varepsilon>0$. It was observed in \textbf{\cite{BP-PAMS}
}that, given an infinitesimal array as above and arbitrary $\alpha,\beta>0$,
there exists $N\in\mathbb{N}$ such that $\varphi_{\mu_{n,i}}$ is
defined on $\Gamma_{\alpha,\beta}$ for every $n\ge N$ and $i=1,\dots,k_{n}$.
In particular, if we set
\[
\nu_{n}=\mu_{n,1}\boxplus\cdots\boxplus\mu_{n,k_{n}},
\]
$\varphi_{\nu_{n}}$ is defined on $\Gamma_{\alpha,\beta}$ for $n\ge N$.
Therefore the following result applies in particular to free additive
convolutions of measures in an infinitesimal array and provides our
extension of superconvergence to such an array.
\begin{thm}
\label{thm:general additive} Let $\{\nu_{n}\}_{n\in\mathbb{N}}$
be a sequence of Borel probability measures on $\mathbb{R}$ that
converges weakly to an $\boxplus$-infinitely divisible measure $\nu.$
Suppose that for every $\alpha,\beta>0$ there exists $N\in\mathbb{N}$
such that $\varphi_{\nu_{n}}$ is defined in $\Gamma_{\alpha,\beta}$
for every $n\ge N$. Let $J\subset\mathbb{R}$ be a compact interval
such that $\nu$ is absolutely continuous and $d\nu/dx>0$ in a neighborhood
of $J$. Then $\nu_{n}$ is absolutely continuous on a neighborhood
of $J$ with a real-analytic density for sufficiently large $n$,
and the densities $d\nu_{n}/dx$ converge uniformly on $J$, along
with all their derivatives, to $d\nu/dx$ as $n\to\infty$.
\end{thm}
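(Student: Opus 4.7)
The plan is to imitate, for each large $n$, the conformal picture that works when $\nu$ is $\boxplus$-infinitely divisible. Concretely, I will construct for each large $n$ a real-analytic ``critical curve'' $\gamma_n(s)=s+if_n(s)$ that lies close to $\gamma_\nu(s)=s+if_\nu(s)$ and on which $H_{\nu_n}$ takes real values. Since $H_{\nu_n}$ will turn out to be a local biholomorphism near $\gamma_n$, inverting it locally will produce an analytic extension of $F_{\nu_n}$ across a real interval containing $J$ in its interior, and the real-analytic density will then be read off via the Stieltjes inversion formula.

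First I would choose a compact interval $[A,B]\subset\mathbb{R}$ with $f_\nu>0$ on a neighborhood of $[A,B]$, such that the image $H_\nu(\gamma_\nu([A,B]))$ contains $J$ in its interior; this is possible because $d\nu/dx>0$ in a neighborhood of $J$. Since $\gamma_\nu([A,B])$ is a compact subset of $\mathbb{H}$, it lies in some Stolz angle $\Gamma_{\alpha,\beta}$. The hypothesis on $\{\nu_n\}$ together with the weak convergence criterion recalled before the theorem then gives $\varphi_{\nu_n}\to\varphi_\nu$ locally uniformly on this Stolz angle, and Cauchy's estimates provide the same convergence for all derivatives on an open complex neighborhood of $\gamma_\nu([A,B])$; in particular $H_{\nu_n}\to H_\nu$ uniformly with all derivatives on such a neighborhood.

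Next I would apply the implicit function theorem to the equation $\Im H_{\nu_n}(s+it)=0$ along the curve $\gamma_\nu$. Lemma~\ref{lem:y-derivative>0} gives $\partial_t\Im H_\nu(s+if_\nu(s))>0$, so by uniform convergence the same holds for $H_{\nu_n}$ on a neighborhood of $[A,B]$ for all sufficiently large $n$. This yields real-analytic functions $f_n:[A,B]\to(0,\infty)$ satisfying $\Im H_{\nu_n}(s+if_n(s))=0$ and converging to $f_\nu$ together with derivatives. A repetition of the computation in Remark~\ref{rem:y-derivative again} applied to $H_{\nu_n}$ then shows that $\Re H'_{\nu_n}(s+if_n(s))>0$ and that $s\mapsto H_{\nu_n}(s+if_n(s))$ is strictly increasing with nonvanishing derivative; its image is a real interval $J_n$ which, for $n$ large, contains $J$ in its interior. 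The positivity of $\Re H'_{\nu_n}$ makes $H_{\nu_n}$ a local biholomorphism from a tubular neighborhood of $\gamma_n$ onto a complex neighborhood of $J_n$, mapping the region above $\gamma_n$ into $\mathbb{H}$. Inverting locally produces the desired analytic extension of $F_{\nu_n}$ across $J_n$, so $\nu_n$ has a real-analytic density there, given by $-\pi^{-1}\Im G_{\nu_n}$. Uniform convergence of the densities together with all derivatives on $J$ then follows from the fact that the analytic extensions of $G_{\nu_n}$ converge uniformly to that of $G_\nu$ on a common open complex neighborhood of $J$, after which Cauchy's estimates handle the derivatives.

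The main obstacle is that $\nu_n$ is not assumed $\boxplus$-infinitely divisible, so $F_{\nu_n}$ has no global conformal structure on $\mathbb{H}$ to invoke directly. The remedy is to work entirely on the $H_{\nu_n}$ side, where the hypothesis supplies uniform convergence on Stolz angles, and to perform the inversion only locally at the very end, relying on Lemma~\ref{lem:y-derivative>0} to ensure that $H_{\nu_n}$ is genuinely a local biholomorphism near the critical curve for all large $n$. The technically delicate point is making the implicit function theorem application uniform in $s\in[A,B]$ so that $f_n$ exists as a single real-analytic function on all of $[A,B]$ and converges smoothly to $f_\nu$; once this is in hand, the rest is a matter of transplanting the infinitely divisible picture to each $\nu_n$.
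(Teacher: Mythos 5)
Your plan follows the same strategy as the paper (locate the critical curve $f_n$ via Lemma~\ref{lem:y-derivative>0}, invert $H_{\nu_n}$ near it, read off the density), but it has two genuine gaps, both of which the paper's proof handles with care and which are not automatic.

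First, the passage from ``weak convergence'' to ``$\varphi_{\nu_n}\to\varphi_\nu$ locally uniformly on the Stolz angle containing $\gamma_\nu([A,B])$'' is not justified. The criterion recalled before the theorem only guarantees locally uniform convergence on \emph{some} Stolz angle $\Gamma_{\alpha_0,\beta_0}$ determined by tightness; the hypothesis of the theorem gives that $\varphi_{\nu_n}$ is \emph{defined} on the wider angle $\Gamma_{\alpha,\beta}\supset\gamma_\nu([A,B])$ for large $n$, but says nothing directly about convergence there. To upgrade from definition to locally uniform convergence you need a normal-families argument: the functions $\varphi_{\nu_n}$ take values in $\overline{-\mathbb{H}}$, hence form a normal family, and Vitali--Montel then propagates the convergence already known on a disk inside $\Gamma_{\alpha_0,\beta_0}$ to all of $\Gamma_{\alpha,\beta}$. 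This step is explicitly invoked in the paper and is the reason the hypothesis on arbitrary Stolz angles has teeth; your write-up asserts the conclusion without supplying the Montel step.

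Second, and more substantively, ``inverting $H_{\nu_n}$ locally'' near $\gamma_n$ does not by itself produce an analytic continuation of $F_{\nu_n}$. The local inverse of $H_{\nu_n}$ in a tubular neighborhood of $\gamma_n$ is \emph{a priori} just some analytic function; to identify it with $F_{\nu_n}$ you must analytically continue the identity $F_{\nu_n}(H_{\nu_n}(z))=z$, which is known only on a Stolz angle $\Gamma_{\alpha_0,\beta_0}$ at infinity, down through a connected open subset of $\{z:\Im H_{\nu_n}(z)>0\}$ into the region just above $\gamma_n$. Since $\nu_n$ is not assumed $\boxplus$-infinitely divisible, there is no global $\Omega_{\nu_n}$ to appeal to, and connectedness of $\{\Im H_{\nu_n}>0\}$ within the relevant Stolz angle needs to be arranged. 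The paper does this deliberately: it chooses the rectangle $K=\{s+it:s\in[A,B],\,\delta\le t\le 1/\delta\}$ so that its interior meets $\Gamma_{\alpha_0,\beta_0}$, verifies that its upper part $K_+$ lies in $\Omega_\nu$ (hence $\Im H_{\nu_n}>0$ on $K_+$ for large $n$), and uses $K_+$ together with the monotonicity from (\ref{eq:imaginary partial}) to connect $\Gamma_{\alpha_0,\beta_0}$ to the region above $\gamma_n$. Without such a connecting domain, the identification of the local inverse with $F_{\nu_n}$ is unjustified, and the conclusion about the density of $\nu_n$ does not follow. Once these two points are repaired---a Vitali--Montel argument and the construction of a tall rectangle reaching into the Stolz angle---your outline matches the paper's proof.
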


\begin{proof}
Since unit point masses are purely singular we may, and shall, assume
that the support of $\nu$ contains more than one point. It suffices
to show that $d\nu_{n}/dx$ converges locally uniformly to $d\nu/dx$
on a set on which the latter density is strictly positive. Fix $\alpha_{0},\beta_{0}>0$
such that $\Gamma_{\alpha_{0},\beta_{0}}$ is contained in the domain
of $\varphi_{\nu_{n}}$ for every $n\in\mathbb{N}$ and $\varphi_{\nu_{n}}$
converges locally uniformly to $\varphi_{\nu}$ on $\Gamma_{\alpha_{0},\beta_{0}}$.
As noted above, the function $\varphi_{\nu}$ is defined on $\mathbb{H}$.
Fix a point $x_{0}\in\mathbb{R}$ such that $\nu$ is absolutely continuous
in a neighborhood of $x_{0}$ and $(d\nu/dx)(x_{0})>0$. It follows
that there exists $s_{0}\in\mathbb{R}$ such that $f_{\nu}(s_{0})>0$
and $x_{0}=H_{\nu}(s_{0}+if_{\nu}(s_{0}))$. Given an arbitrary number
$\varepsilon>0$ with $\varepsilon<f_{\nu}(s_{0})$, Lemma \ref{lem:y-derivative>0}
allows us to choose an interval $[A,B]$ containing $s_{0}$ such
that 
\[
Q=\{s+it:s\in[A,B],|t-f_{\nu}(s_{0})|\le\varepsilon\}
\]
 is contained in $\mathbb{H}$, $|f_{\nu}(s)-f_{\nu}(s_{0})|<\varepsilon$
for $s\in[A,B]$, and
\[
\frac{\partial}{\partial t}\Im H_{\nu}(s+it)>0,\quad s+it\in Q.
\]
Choose also $\delta>0$ such that the interior of the rectangle
\[
K=\left\{ s+it:s\in[A,B],\delta\le t\le\frac{1}{\delta}\right\} 
\]
 has nonempty intersection with $\Gamma_{\alpha_{0},\beta_{0}}$ and
$Q\subset K$. Finally, choose $\alpha,\beta>0$ such that 
\[
K\cup\Gamma_{\alpha_{0},\beta_{0}}\subset\Gamma_{\alpha,\beta}.
\]
By hypothesis, there exists $N\in\mathbb{N}$ be such that $\varphi_{\nu_{n}}$
is defined on $\Gamma_{\alpha,\beta}$ for every $n\ge N$. The sequence
of restrictions $\{\varphi_{\nu_{n}}|_{\Gamma_{\alpha_{0},\beta_{0}}}\}_{n\ge N}$
converges to $\varphi_{\nu}$ uniformly in some closed disk contained
in $\Gamma_{\alpha_{0},\beta_{0}}$, and since these functions take
values in $\mathbb{-H}$, the Vitali-Montel theorem implies that $\varphi_{\nu_{n}}$
actually converges locally uniformly to $\varphi_{\nu}$ on $\Gamma_{\alpha,\beta}$,
and thus $H_{\nu_{n}}$ also converges locally uniformly to $H_{\nu}$
on $\Gamma_{\alpha,\beta}$. In particular, $H_{\nu_{n}}$ converges
uniformly to $H_{\nu}$ on a neighborhood of $K$ so, after replacing
$N$ by a larger value, we may assume that
\begin{equation}
\frac{\partial}{\partial t}\Im H_{\nu_{n}}(s+it)>0,\quad s+it\in Q,\,n\ge N.\label{eq:imaginary partial}
\end{equation}
Note also that the set
\[
K_{+}=\{s+it\in K:t\ge f_{\nu}(s_{0})+\varepsilon\}
\]
is contained in $\Omega_{\nu}$, while
\[
K_{-}=\{s+it\in K:t\le f_{\nu}(s_{0})-\varepsilon\}
\]
is disjoint from $\Omega_{\nu}$. Thus, possibly making $N$ even
larger, we may assume that
\[
\Im H_{\nu_{n}}(z)>0\text{ for }z\in K_{+}\text{ and }\Im H_{\nu_{n}}(z)<0\text{ for }z\in K_{-},
\]
provided that $n\ge N$. Combining this with (\ref{eq:imaginary partial}),
we see that for every $n\ge N$ and for every $s\in[A,B]$ there exists
a unique $f_{n}(s)>0$ such that 
\[
\Im H_{\nu_{n}}(s+it)>0\text{ for }t>f_{n}(s)\text{ and }\Im H_{\nu_{n}}(s+it)<0\text{ for }\delta<t<f_{n}(s).
\]
Of course, we have $|f_{n}(s)-f_{\nu}(s)|<\varepsilon$. Since $\varepsilon$
can be made arbitrarily small, we conclude that $f_{n}$ converges
to $f_{\nu}$ uniformly on $[A,B]$. 

We observe next that $F_{\nu_{n}}(H_{\nu_{n}}(z))=z$ for $z\in\Gamma_{\alpha_{0},\beta_{0}},$
and our choice of $K$, along with analytic continuation, show that
\[
F_{\nu_{n}}(H_{\nu_{n}}(s+it))=s+it,\quad s+it\in Q,\,t>f_{n}(s),\,n\ge N.
\]
Now, the set $Q$ is convex, and thus (\ref{eq:imaginary partial})
implies that $H_{\nu_{n}}$ is injective with an analytic inverse
on a neighborhood of $Q$, and that $F_{\nu_{n}}$ coincides with
the inverse of $H_{\nu_{n}}$ on the set $\{s+it\in Q:t>f_{n}(s)\}$.
It follows that $F_{\nu_{n}}$ has an analytic continuation to a neighborhood
of $H_{\nu_{n}}(Q).$ In particular, $F_{\nu_{n}}$ has an analytic
continuation across the segment
\[
\{H_{\nu_{n}}(s+if_{n}(s)):s\in[A,B]\},
\]
and this segment tends to 
\[
\{H_{\nu}(s+if_{\nu}(s)):s\in[A,B]\}
\]
 which is a neighborhood of $x_{0}$. This shows that $\nu_{n}$ is
absolutely continuous with a real-analytic density $d\nu_{n}/dx$
in a neighborhood of $x_{0}$, and this density satisfies the formula
\[
\frac{d\nu_{n}}{dx}\left(H_{\nu_{n}}(s+if_{n}(s)\right)=\frac{1}{\pi}\frac{f_{n}(s)}{s^{2}+f_{n}(s)^{2}},\quad s\in[A,B].
\]

Finally, to show that the densities $d\nu_{n}/dx$ converge uniformly
to $d\nu/dx$ in a neighborhood of $x_{0}$, it suffices to show that
$F_{\nu_{n}}$ converges uniformly to $F_{\nu}$ in a neighborhood
of $x_{0}$. This, as well as the convergence of the derivatives,
follows from the formula 
\[
F_{\nu_{n}}(z)=H_{\nu_{n}}^{-1}(z)=\frac{1}{2\pi i}\int_{\partial Q}\frac{\zeta H_{\nu_{n}}(\zeta)}{H_{\nu_{n}}(\zeta)-z}\,d\zeta
\]
 that holds for $z\in\mathbb{H}$ close to $x_{0}$ and for sufficiently
large $n$.
\end{proof}

The following corollary formalizes the discussion preceding Theorem~\ref{thm:general additive}.
\begin{cor}
    Suppose that $k_{1},k_{2},\dots\in\mathbb{N}$, and that $\{\mu_{n,i}:n,i\in\mathbb{N},i\le k_{n}\}$
is an infinitesimal array of Borel probability measures on $\mathbb{R}$. Set
\[\nu_n = \mu_{n,1}\boxplus\cdots\boxplus\nu_{n,k_n}.\]
Assume that $\nu_n$ converges to a $\boxplus$-infinite divisible measure $\nu$. Then for any compact interval $J\subset\mathbb{R}$ such that $\nu$ is absolutely continuous 
and $d\nu/dx>0$ in a neighborhood of $J$, $\nu_n$ is absolutely continuous on a neighborhood of $J$ with a real-analytic density for sufficiently large $n$, 
and the densities $d\nu_n/dx$ converge uniformly on $J$ to $d\nu/dx$ as $n\to\infty$.
\end{cor}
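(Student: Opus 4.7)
The plan is to deduce the corollary directly from Theorem~\ref{thm:general additive}. Since the conclusions of the two statements match verbatim, everything reduces to verifying that the hypothesis of the theorem holds, namely that for every $\alpha,\beta>0$ there exists $N\in\mathbb{N}$ such that $\varphi_{\nu_{n}}$ is defined on $\Gamma_{\alpha,\beta}$ for every $n\ge N$. Weak convergence of $\nu_n$ to $\nu$ is given, so the only remaining ingredient is the Stolz-angle condition for the convolutions.

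First, I would invoke the fact attributed to \cite{BP-PAMS} and recalled in the paragraph immediately preceding Theorem~\ref{thm:general additive}: because the array $\{\mu_{n,i}\}$ is infinitesimal, for every $\alpha,\beta>0$ there exists $N$ such that each $\varphi_{\mu_{n,i}}$ is defined on $\Gamma_{\alpha,\beta}$ for all $n\ge N$ and $1\le i\le k_n$. Next, I would use the linearization property
\[
\varphi_{\mu_1\boxplus\cdots\boxplus\mu_k} = \varphi_{\mu_1}+\cdots+\varphi_{\mu_k}
\]
on the common domain of definition, which extends by induction from the two-measure case recalled in Section~\ref{sec:Additive-convolution-and}. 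Applied to $\nu_n=\mu_{n,1}\boxplus\cdots\boxplus\mu_{n,k_n}$, this shows that $\varphi_{\nu_n}=\sum_{i=1}^{k_n}\varphi_{\mu_{n,i}}$ is a well-defined analytic function on $\Gamma_{\alpha,\beta}$ for $n\ge N$. Hence the Stolz-angle hypothesis of Theorem~\ref{thm:general additive} is met.

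Once this is in place, Theorem~\ref{thm:general additive} applies directly and produces the absolute continuity of $\nu_n$ in a neighborhood of $J$ for large $n$, the real-analyticity of $d\nu_n/dx$, and the uniform convergence on $J$ of $d\nu_n/dx$ to $d\nu/dx$. There is no genuine obstacle here: the corollary is essentially a repackaging of the theorem together with the Bercovici--Pata observation on Voiculescu transforms of infinitesimal arrays, and the only thing to be careful about is citing the linearization identity on the (large) common Stolz angle rather than separately on each $\varphi_{\mu_{n,i}}$'s natural domain.
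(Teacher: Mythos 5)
Your proposal is correct and matches the paper's own reasoning essentially verbatim: the paper states the corollary as a formalization of the discussion immediately preceding Theorem~\ref{thm:general additive}, where the Bercovici--Pata observation from \cite{BP-PAMS} together with the linearization $\varphi_{\nu_n}=\sum_{i}\varphi_{\mu_{n,i}}$ on a common $\Gamma_{\alpha,\beta}$ is used to verify the Stolz-angle hypothesis. Nothing to add.
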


\section{Multiplicative convolution on $\mathbb{R}_{+}$\label{sec:Multiplicative-convolution-onR}}

Suppose that $\mu$ is a Borel probability measure on $\mathbb{R}_{+}=[0,+\infty)$,
other than the point mass $\delta_{0}$ at the origin. The analytic
functions
\[
\psi_{\mu}(z)=\int_{\mathbb{R}_{+}}\frac{zt}{1-zt}\,d\mu(t),\quad\eta_{\mu}(z)=\frac{\psi_{\mu}(z)}{1+\psi_{\mu}(z)},\quad z\in\mathbb{C}\backslash\mathbb{R}_{+},
\]
are real-valued on $(-\infty,0)$ and the measure $\mu$ is uniquely
determined by either $\psi_{\mu}$ or $\eta_{\mu}$. Indeed, we have
\begin{equation}
zG_{\mu}(z)=\frac{1}{1-\eta_{\mu}\left(\frac{1}{z}\right)},\quad z\in\mathbb{C}\setminus\mathbb{R}_{+},\label{eq:G and eta}
\end{equation}
to which the Stieltjes inversion can be applied to recover $\mu$.
Moreover, $\psi_{\mu}$ and $\eta_{\mu}$ map $\mathbb{H}$ to itself
and, in addition,
\[
\arg(\eta_{\mu}(z))\ge\arg z,\quad z\in\mathbb{H},
\]
where `$\arg$' stands for the principal value of the argument, so
$\arg z\in(0,\pi)$ for $z\in\mathbb{H}$, see \cite{BB-IMRN}. It
was shown in \textbf{\cite{BVunbdd} }that there exist an open set
$V$, containing some interval of the form $(-a,0)$, and an analytic
function $\Sigma_{\mu}$ defined on $V$ such that 
\[
\eta_{\mu}(z\Sigma_{\mu}(z))=z,\quad z\in V.
\]
The related equation $\eta_{\mu}(z)\Sigma_{\mu}(\eta_{\mu}(z))=z$
holds for $z$ in every connected open set $U\subset\mathbb{C}\setminus\mathbb{R}_{+}$ that intersects ${\mathbb R}_-$ and 
such that $\eta_{\mu}(z)$ belongs to the domain of $\Sigma_{\mu}$.
The function $\Sigma_{\mu}$ serves an analogous role relative to
multiplicative free convolution to that of $\varphi_{\mu}$ relative
to additive free convolution, namely
\[
\Sigma_{\mu\boxtimes\nu}(z)=\Sigma_{\mu}(z)\Sigma_{\nu}(z)
\]
for $z$ in every domain on which the three functions are defined.
The measure $\mu$ is $\boxtimes$-infinitely divisible precisely
when the function $\Sigma_{\mu}$ continues analytically to the entire
domain $\mathbb{C}\backslash\mathbb{R}_{+}$ and this analytic continuation
can be written as
\[
\Sigma_{\mu}(z)=\gamma\exp\left[\int_{[0,+\infty]}\frac{1+tz}{z-t}\,d\sigma(t)\right],\quad z\in\mathbb{C}\backslash\mathbb{R}_{+},
\]
where $\gamma>0$ and $\sigma$ is a finite, positive Borel measure
on the one-point compatification $[0,+\infty]$ of ${\mathbb R}_+$ \cite{BVunbdd}. 

Suppose now that $\mu$ is $\boxtimes$-infinitely divisible. The
equation
\[
\eta_{\mu}(z)\Sigma_{\mu}(\eta_{\mu}(z))=z
\]
extends by analytic continuation to the entire slit plane $\mathbb{C}\backslash\mathbb{R}_{+}.$
In particular, it shows that $\eta_{\mu}$ maps $\mathbb{C}\backslash\mathbb{R}_{+}$
conformally onto a domain $\Omega_{\mu}$ that is symmetric relative
to the real line. In addition to the interval $(-\infty,0]$, the
boundary of $\Omega_{\mu}\cap\mathbb{H}$ consists of  a curve of the
form 
\[
C=\{re^{ih_{\mu}(r)}:r\in(0,+\infty)\},
\]
 where $h_{\mu}:(0,+\infty)\to[0,\pi)$ is a continuous function,
real analytic wherever it is strictly positive. In other words,
\[
\Omega_{\mu}\cap\mathbb{H}=\{re^{i\theta}:r>0,h_{\mu}(r)<\theta<\pi\}.
\]
 Moreover, the function $\eta_{\mu}$ extends continuously and injectively
to $\mathbb{H}\cup(0,+\infty)$ such that the range $\eta_{\mu}\left((0,+\infty)\right)$
is exactly the curve $C$ and that the inversion relationships between
$\eta_{\mu}$ and the map $\Phi(z)=z\Sigma_{\mu}(z)$ now extend to
the boundary of the relevant domains. (See  \textbf{\cite[Section2]{BWZ-mul}}
for a review of these results concerning boundary behavior, and the references therein
for their origin.) Thus, there is at most one value
$t_{0}>0$ such that $\eta_{\mu}(t_{0})=1$. Such a point exists precisely
when $h_{\mu}(1)=0$. Outside possibly the point $1/t_{0}$, the measure
$\mu$ is absolutely continuous and its density is locally analytic
wherever it is strictly positive. We give a short proof of this
analyticity for  future reference. Suppose that $x_{0}>0$ is a point
where the density of $\mu$ is positive. We write $1/x_{0}=r_{0}e^{ih_{\mu}(r_{0})}\Sigma_{\mu}(re^{ih_{\mu}(r_{0})})$
for some $r_{0}>0$ such that $h_{\mu}(r_{0})>0$. The continuity
of $h_{\mu}$ yields $h_{\mu}(r)>0$ for $r$ near $r_{0}$. For such
$r$, the function $\Phi(z)=z\Sigma_{\mu}(z)$ has a non-zero complex
derivative at $z=re^{ih_{\mu}(r)}$. This is because $\Phi^{\prime}(re^{ih_{\mu}(r)})=0$
would imply that the positive number $\Phi(re^{ih_{\mu}(r)})$ has
multiple preimages located on the curve $C$, contradicting  the
fact that $\Phi$ is injective on $C$. Since the inversion equation
\[
\Phi(\eta_{\mu}(z))=z
\]
holds for $z\in\mathbb{H}$ close to $1/x_{0}$, we conclude that
$\eta_{\mu}$ can be analytically continued to a neighborhood of $1/x_{0}$
as the inverse of the function $\Phi$. The desired analyticity now
follows from (\ref{eq:G and eta}) and the Stieltjes inversion formula.
(One can also write an implicit formula for the density, just as
in the additive case, to see its analyticity directly; see for instance
\cite[(3.1)]{BWZ-mul}.)

Suppose now that $k_{1}<k_{2}<\cdots$ is a sequence of natural numbers
and $\{\mu_{n,i}:n,i\in\mathbb{N},i\le k_{n}\}$ is an infinitesimal
array of probability measures on $\mathbb{R}_{+}$; that is,
\[
\lim_{n\to\infty}\min_{1\le i\le k_{n}}\mu_{n,i}((1-\varepsilon,1+\varepsilon))=1
\]
for every $\varepsilon\in(0,1)$. The analog of the domain $\Gamma_{\alpha,\beta}$
in the context of multiplicative free convolution is the domain
\[
\Omega_{\rho,\theta}=\left\{ re^{it}:\rho<r<\frac{1}{\rho},t\in(\theta,2\pi-\theta)\right\} ,
\]
where $\rho>0$ and $\theta\in(0,\pi)$. By the results in \cite{BB-CMS},
given arbitrarily small $\rho>0$ and $\theta\in(0,\pi)$, all the
functions $\Sigma_{\mu_{n,i}}$ are defined in $\Omega_{\rho,\theta}$
if $n$ is sufficiently large. As shown in \cite{BVunbdd}, the weak
convergence of probability measures on $\mathbb{R}_{+}$ can be translated
into local uniform convergence of the corresponding $\eta$-functions,
or into local uniform convergence of the corresponding $\Sigma$-functions
on some $\Omega_{\rho,\theta}$. The following result is analogous
to Theorem \ref{thm:general additive} and therefore it provides a
superconvergence result for infinitesimal arrays.
\begin{thm}
\label{thm:multi R+}Let $\{\nu_{n}\}_{n\in\mathbb{N}}$ be a sequence
of Borel probability measures on $\mathbb{R_{+}}$ that converges
weakly to an $\boxtimes$-infinitely divisible measure $\nu.$ Suppose
that for every $\rho>0$ and $\theta\in(0,\pi)$ there exists $N\in\mathbb{N}$
such that $\Sigma_{\nu_{n}}$ is defined in $\Omega_{\rho,\theta}$
for every $n\ge N$. Let $J\subset(0,+\infty)$ be a compact interval
such that $\nu$ is absolutely continuous and $d\nu/dx>0$ in a neighborhood
of $J$. Then $\nu_{n}$ is absolutely continuous with an analytic
density on a neighborhood of $J$ for $n$ sufficiently large, and
the densities $d\nu_{n}/dx$ converge uniformly on $J$, along with
all their derivatives, to $d\nu/dx$ as $n\to\infty$.
\end{thm}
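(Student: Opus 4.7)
The plan is to mimic the proof of Theorem~\ref{thm:general additive} step by step, with $\eta_{\nu_n}$ playing the role of $F_{\nu_n}$, the function $\Phi_{\nu_n}(z)=z\Sigma_{\nu_n}(z)$ playing the role of $H_{\nu_n}$, the curve $C=\{re^{ih_\nu(r)}\}$ replacing the graph $\{s+if_\nu(s)\}$, and the domains $\Omega_{\rho,\theta}$ replacing the Stolz angles $\Gamma_{\alpha,\beta}$. As in the additive case, it is enough to establish uniform convergence of densities near a single point $x_0>0$ with $(d\nu/dx)(x_0)>0$; by the discussion preceding the theorem, we may choose $r_0>0$ with $h_\nu(r_0)>0$ so that $1/x_0=\Phi_\nu(r_0e^{ih_\nu(r_0)})$.

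The first step is to produce the multiplicative analog of Lemma~\ref{lem:y-derivative>0}: a strictly positive transversal derivative for $\Phi_\nu$ along $C$ near $r_0e^{ih_\nu(r_0)}$. Since $\Phi_\nu$ maps $\Omega_\nu\cap\mathbb{H}$ conformally onto $\mathbb{H}$, sends the arc of $C$ near $r_0e^{ih_\nu(r_0)}$ bijectively into $(0,+\infty)$, and (by Schwarz reflection across that arc) extends so as to send the region just below $C$ into $-\mathbb{H}$, we are exactly in the setting of Remark~\ref{rem:y-derivative again}, applied after parametrizing $C$ locally as a Cartesian graph $y=g(x)$. The remark supplies the desired positivity without any explicit derivative computation---which is precisely the sort of cumbersome multiplicative calculation the remark was designed to avoid.

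Fix $\rho_0,\theta_0$ such that $\Sigma_{\nu_n}\to\Sigma_\nu$ locally uniformly on $\Omega_{\rho_0,\theta_0}$. Choose a small convex compact neighborhood $Q$ of $r_0e^{ih_\nu(r_0)}$ that straddles $C$, so that the transversal positivity for $\Phi_\nu$ holds throughout $Q$; embed it in a slightly larger compact $K\subset\mathbb{C}\setminus\mathbb{R}_+$ meeting $\Omega_{\rho_0,\theta_0}$, and pick $\rho,\theta$ with $K\cup\Omega_{\rho_0,\theta_0}\subset\Omega_{\rho,\theta}$. By hypothesis, $\Sigma_{\nu_n}$ is analytic on $\Omega_{\rho,\theta}$ for all large $n$; a Vitali--Montel argument---using that $\Sigma_{\nu_n}$ is nonvanishing and real-positive on the segment $(-\infty,0)\cap\Omega_{\rho,\theta}$ to secure a normal family---upgrades uniform convergence on some disk in $\Omega_{\rho_0,\theta_0}$ to local uniform convergence of $\Sigma_{\nu_n}$, and hence of $\Phi_{\nu_n}$, on all of $\Omega_{\rho,\theta}$, in particular on a neighborhood of $K$.

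From here, the remainder of the argument runs essentially verbatim as in the additive proof. The transversal-derivative positivity persists for $\Phi_{\nu_n}$ on $Q$ for large $n$, producing a curve $C_n$ tending to $C$ along which $\Phi_{\nu_n}$ is real-valued, with $\Im\Phi_{\nu_n}$ changing sign across it. Convexity of $Q$ together with the positive transversal derivative force $\Phi_{\nu_n}$ to be injective on a neighborhood of $Q$, so $\eta_{\nu_n}=\Phi_{\nu_n}^{-1}$ analytically continues across the real arc $\Phi_{\nu_n}(C_n\cap Q)$, which tends to an arc containing a neighborhood of $1/x_0$. Equation (\ref{eq:G and eta}) together with Stieltjes inversion then give real-analytic densities for $\nu_n$ near $x_0$, and uniform convergence of these densities along with all their derivatives follows from representing $\eta_{\nu_n}$ via a Cauchy integral over $\partial Q$, exactly as before. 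The main subtlety compared with the additive case is the normality step: $\Sigma_{\nu_n}$ does not automatically take values in a half-plane the way $\varphi_{\nu_n}$ does, so to deploy Vitali--Montel one must exploit the nonvanishing of $\Sigma_{\nu_n}$ together with its reality on $(-\infty,0)\cap\Omega_{\rho,\theta}$, or equivalently pass through the automatically normal family $\eta_{\nu_n}:\mathbb{H}\to\mathbb{H}$.
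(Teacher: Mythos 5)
Your proposal follows essentially the same route as the paper's sketch: identify $\Phi_{\nu_n}=z\Sigma_{\nu_n}(z)$ as the analog of $H_{\nu_n}$, obtain a strictly positive transversal derivative of $\Phi_\nu$ along the boundary curve $C$ via Remark~\ref{rem:y-derivative again}, propagate this to $\Phi_{\nu_n}$ by local uniform convergence on a polar rectangle straddling $C$, deduce injectivity and the existence of a nearby real arc $C_n$, and continue $\eta_{\nu_n}=\Phi_{\nu_n}^{-1}$ across a neighborhood of $1/x_0$, after which $(\ref{eq:G and eta})$ and a Cauchy integral give analyticity and uniform convergence of the densities with derivatives. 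The paper phrases the transversal positivity in polar coordinates, $\frac{\partial}{\partial\theta}\arg\Phi(re^{i\theta})>0$, whereas you parametrize $C$ as a Cartesian graph and apply the remark directly; these are the same statement after a smooth change of coordinates.

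One point to tighten: your stated first justification for normality---that $\Sigma_{\nu_n}$ is ``nonvanishing and real-positive on $(-\infty,0)\cap\Omega_{\rho,\theta}$''---is not by itself a reason for the family to be normal (consider $e^{nz}$, nonvanishing and real-positive on a real segment but not normal). What does work is either of the following: (i) your second alternative, passing through the normal family $\eta_{\nu_n}:\mathbb{H}\to\mathbb{H}$; or (ii) the observation that for $z\in\Omega_{\rho,\theta}\cap\mathbb{H}$ one has $\eta_{\nu_n}(z\Sigma_{\nu_n}(z))=z$, which forces $z\Sigma_{\nu_n}(z)\in\mathbb{H}$, and then $\arg\eta_{\nu_n}(w)\ge\arg w$ gives $\arg\Sigma_{\nu_n}(z)\in(-\pi,0]$; thus $\Sigma_{\nu_n}$ omits the upper half-plane on $\Omega_{\rho,\theta}\cap\mathbb{H}$ and, by Schwarz symmetry from reality on $(-\infty,0)$, omits the lower half-plane on $\Omega_{\rho,\theta}\cap(-\mathbb{H})$, giving normality on all of $\Omega_{\rho,\theta}$. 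With that repair, the proposal is correct and matches the paper's argument.
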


\begin{proof}
[Sketch of proof]To simplify notation, we set
\[
\Phi(z)=z\Sigma_{\nu}(z)\text{ and }\Phi_{n}(z)=z\Sigma_{\nu_{n}}(z),\quad n\in\mathbb{N}.
\]
Suppose that $x_{0}>0$ is a point where the density of $\nu$ is
positive. As seen above, we have
\[
\frac{1}{x_{0}}=\Phi(r_{0}e^{ih_{\nu}(r_{0})})
\]
for some $r_{0}>0$ such that $h_{\nu}(r_{0})>0$. The point $r_{0}e^{ih_{\nu}(r_{0})}$
belongs to the domain $\Omega_{\rho,\theta}$ provided that $\rho$
and $\theta$ are sufficiently small. Fix such values $\rho$ and
$\theta$, and note that $\Phi_{n}$ is defined on $\Omega_{\rho,\theta}$
provided that $n$ is sufficiently large. As seen in the earlier discussions,
the essential point is to verify that, for $z\in\mathbb{H}$ sufficiently
close to $1/x_{0}$, we have $\eta_{\nu_{n}}(z)\in\Omega_{\rho,\theta}$
and the identity
\begin{equation}
\Phi_{n}(\eta_{\nu_{n}}(z))=z\label{eq:inversion-multi-line}
\end{equation}
 holds for $n$ sufficiently large. The absolute continuity of $\nu_{n}$
near $x_{0}$ would follow as a byproduct. Note that the identity
(\ref{eq:inversion-multi-line}) holds for $z\in(-T,-1/T)$ if $T$
is large enough, and it extends by analytic continuation to the largest
interval of this form with the property that $\eta_{\nu_{n}}((-T,-1/T))\subset\Omega_{\rho,\theta}$. 

We observe now that, by using the polar coordinates, the argument of
Remark \ref{rem:y-derivative again} (or alternatively, more explicit
calculations from \cite[Lemma 4.2]{HZ-MathZ}) yields 
\[
\frac{\partial}{\partial\theta}\arg\left(\Phi(re^{i\theta})\right)>0\text{ for }r=r_{0}\text{ and }\theta=h_{\nu}(r_{0}).
\]
So we may choose a neighborhood $W$ of $r_{0}e^{ih_{\nu}(r_{0})}$
and an integer $N$ such that $\Phi_{n}$ is defined on $\Omega_{\rho,\theta}$
for $n\ge N$, $W\subset\Omega_{\rho,\theta}\cap\mathbb{H}$, and
\[
\frac{\partial}{\partial\theta}\arg\left(\Phi_{n}(re^{i\theta})\right)>0\text{ for \ensuremath{re^{i\theta}\in W\text{ and }n\ge N.}}
\]
In particular, the complex derivative $\Phi_{n}^{\prime}$ does not
vanish on $W$, so that $\Phi_{n}|_{W}$ has an analytic inverse.
Then we choose $\varepsilon>0$ so small that
\begin{enumerate}
\item $\left|h_{\nu}(r)-h_{\nu}(r_{0})\right|<\varepsilon$ for $|r-r_{0}|\le\varepsilon$,
\item the compact set
\[
K=\{re^{i\theta}:|r-r_{0}|\le\varepsilon,|\theta-h_{\nu}(r_{0})|\le\varepsilon\}
\]
is contained in $W$,
\item $\Phi(re^{ih_{\nu}(r_{0})+i\varepsilon})\in\mathbb{H}$ for $|r-r_{0}|\le\varepsilon$,
and
\item $\Phi(re^{ih_{\nu}(r_{0})-i\varepsilon})\in-\mathbb{H}$ for $|r-r_{0}|\le\varepsilon.$
\end{enumerate}
Since $\Phi_{n}$ converges to $\Phi$ uniformly on $K$, we can assume
that properties (2) and (3) also hold for $\Phi_{n}$ after making
$N$ bigger. It follows that there exists a unique function $h_{n}:[r_{0}-\varepsilon,r_{0}+\varepsilon]\to(h_{\nu}(r_{0})-\varepsilon,h_{\nu}(r_{0})+\varepsilon)$
such that
\[
\Phi_{n}(re^{ih_{n}(r)})\in(0,+\infty),\quad|r-r_{0}|\le\varepsilon.
\]
The function $\Phi_{n}$ is one-to-one on $K$, and the interval
\[
\{\Phi_{n}(re^{ih_{n}(r)}):|r-r_{0}|\le\varepsilon\}
\]
is a neighborhood of $1/x_{0}$ for sufficiently large $n$. We need
to show that (\ref{eq:inversion-multi-line}) holds in the set
\[
\{\Phi_{n}(re^{i\theta}):|r-r_{0}|\le\varepsilon,h_{n}(r)<\theta\le h_{\nu}(r)+\varepsilon\},
\]
provided that $n$ is sufficiently large. This will follow by analytic
continuation once we show that $\Phi_{n}(K')\subset\mathbb{H}$, where
\[
K'=\{re^{i\theta}:|r-r_{0}|\le\varepsilon,\theta\in[h_{\nu}(r)+\varepsilon,\pi]\}.
\]
This last fact follows immediately because $\Phi(K')\subset\mathbb{H}$
and $\Phi_{n}$ converges uniformly on $K'$ to $\Phi$.
\end{proof}

The following corollary formalizes the discussion preceding Theorem~\ref{thm:multi R+}.
\begin{cor}
    Suppose that $k_{1},k_{2},\dots\in\mathbb{N}$, and that $\{\mu_{n,i}:n,i\in\mathbb{N},i\le k_{n}\}$
is an infinitesimal array of Borel probability measures on $\mathbb{R}_+$. Set
\[\nu_n = \mu_{n,1}\boxtimes\cdots\boxtimes\nu_{n,k_n}.\]
Assume that $\nu_n$ converges to a $\boxtimes$-infinite divisible measure $\nu$. Then for any compact interval $J\subset (0,\infty)$ such that $\nu$ is absolutely continuous 
and $d\nu/dx>0$ in a neighborhood of $J$, $\nu_n$ is absolutely continuous on a neighborhood of $J$ with a real-analytic density for sufficiently large $n$, 
and the densities $d\nu_n/dx$ converge uniformly on $J$ to $d\nu/dx$ as $n\to\infty$.
\end{cor}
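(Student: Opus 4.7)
The plan is to deduce this corollary directly from Theorem~\ref{thm:multi R+} by verifying its sole nontrivial hypothesis, namely that $\Sigma_{\nu_{n}}$ is defined on each domain $\Omega_{\rho,\theta}$ for all sufficiently large $n$. The conclusion of the corollary is identical in content to that of Theorem~\ref{thm:multi R+}, so once this hypothesis is checked the corollary is immediate.

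First, I would invoke the result of \cite{BB-CMS} recalled in the text preceding Theorem~\ref{thm:multi R+}: for any $\rho>0$ and $\theta\in(0,\pi)$, there exists $N=N(\rho,\theta)$ such that, for every $n\ge N$ and every $i=1,\dots,k_{n}$, the function $\Sigma_{\mu_{n,i}}$ is defined and analytic on $\Omega_{\rho,\theta}$. Since all the factors $\Sigma_{\mu_{n,i}}$ are then defined on this common domain, the multiplicativity relation
\[
\Sigma_{\nu_{n}}(z)=\prod_{i=1}^{k_{n}}\Sigma_{\mu_{n,i}}(z)
\]
extends $\Sigma_{\nu_{n}}$ analytically to all of $\Omega_{\rho,\theta}$ for $n\ge N$. (One must verify that the product representation, valid a priori only on the intersection of the initial domains of definition near the origin on the negative real axis, agrees with $\Sigma_{\nu_{n}}$ where the latter was already defined; this is just uniqueness of analytic continuation applied on an interval $(-a,0)$ where both sides exist.)

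Second, the weak convergence $\nu_{n}\to\nu$ is given, and $\nu$ is $\boxtimes$-infinitely divisible by hypothesis. The interval $J$ satisfies the assumption of Theorem~\ref{thm:multi R+}. All hypotheses of Theorem~\ref{thm:multi R+} are therefore verified, and its conclusion yields precisely the statement of the corollary: absolute continuity of $\nu_{n}$ with a real-analytic density in a neighborhood of $J$ for large $n$, together with uniform convergence of $d\nu_{n}/dx$ to $d\nu/dx$ on $J$.

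There is essentially no obstacle here beyond bookkeeping. The only point requiring mild care is the identification of the analytic continuation of $\Sigma_{\nu_{n}}$ with the product $\prod_{i}\Sigma_{\mu_{n,i}}$ on the larger domain $\Omega_{\rho,\theta}$, but this is standard since both functions are analytic on the connected set $\Omega_{\rho,\theta}$ and agree on an interval of the negative real axis.
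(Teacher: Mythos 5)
Your proposal is correct and follows exactly the path the paper intends: the corollary is stated as "formalizing the discussion preceding Theorem~\ref{thm:multi R+}," and that discussion consists precisely of invoking the result of \cite{BB-CMS} to place all $\Sigma_{\mu_{n,i}}$ on a common $\Omega_{\rho,\theta}$ for large $n$ and then using multiplicativity of the $\Sigma$-transform to conclude that $\Sigma_{\nu_{n}}$ is defined there, after which Theorem~\ref{thm:multi R+} applies verbatim. Your added remark about identifying the product $\prod_{i}\Sigma_{\mu_{n,i}}$ with the analytic continuation of $\Sigma_{\nu_{n}}$ via agreement on an interval $(-a,0)\subset\mathbb{R}_{-}$ is a correct and worthwhile piece of bookkeeping that the paper leaves implicit.
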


\section{Multiplicative convolution on $\mathbb{T}$\label{sec:Multiplicative-convolution-onT}}

Finally, we consider the superconvergence phenomenon on the unit circle
$\mathbb{T}=\{e^{i\theta}:\theta\in[0,2\pi)\}$. If $\mu$ is a Borel
probability measure on $\mathbb{T}$, one sets again
\[
\psi_{\mu}(z)=\int_{\mathbb{T}}\frac{z\zeta}{1-z\zeta}\,d\mu(\zeta),\quad\eta_{\mu}(z)=\frac{\psi_{\mu}(z)}{1+\psi_{\mu}(z)},
\]
but these functions are now defined on the unit disk $\mathbb{D}=\{z\in\mathbb{C}:|z|<1\}$.
The density of $\mu$ relative to the normalized arclength measure
$m=d\theta/2\pi$ is given almost everywhere by 
\[
\frac{d\mu}{dm}(\xi)=\lim_{r\uparrow1}\Re\frac{1+\eta_{\mu}(r\,\overline{\xi})}{1-\eta_{\mu}(r\,\overline{\xi})},\quad\xi\in\mathbb{T}.
\]
We restrict our considerations to the case in which $\int_{\mathbb{T}}\zeta\,d\mu(\zeta)\ne0$
and we observe that this condition is satisfied for (all but finitely
many) measures in an infinitesimal array. In terms of $\psi$ and
$\eta$, this condition amounts to the requirement that $\psi_{\mu}'(0)=\eta_{\mu}'(0)\ne0$.
This implies the existence of an analytic function $\Sigma_{\mu}$,
defined in a disk $\rho\mathbb{D}=\left\{ z\in\mathbb{C}:\left|z\right|<\rho\right\} $
with $\rho\in(0,1)$ and with values in $\mathbb{D},$ such that
\[
\eta_{\mu}(z\Sigma_{\mu}(z))=z,\quad z\in\rho\mathbb{D}.
\]
We denote by $\rho_{\mu}$ the radius of convergence of the Taylor
series of $\Sigma_{\mu}$, so $\Sigma_{\mu}$ is defined in $\rho_{\mu}\mathbb{D}$.
The free multiplicative convolution of measures on $\mathbb{T}$ satisfies
the identity
\[
\Sigma_{\mu\boxtimes\nu}(z)=\Sigma_{\mu}(z)\Sigma_{\nu}(z),\quad|z|<\min\{\rho_{\mu},\rho_{\nu}\}.
\]

Among the $\boxtimes$-infinitely divisible measures on $\mathbb{T}$,
the normalized arclength measure $m$ is the only one with zero first moment. The other $\boxtimes$-infinitely
divisible measures $\mu$ on $\mathbb{T}$ are characterized by the
fact that $\rho_{\mu}\ge1$ and $\Phi_{\mu}(\mathbb{D})\subset\mathbb{D},$
where $\Phi_{\mu}(z)=z\Sigma_{\mu}(z)$. Clearly, if $\mu$ is $\boxtimes$-infinitely
divisible, the equation
\begin{equation}
\Phi_{\mu}(\eta_{\mu}(z))=z\label{eq:inversionT}
\end{equation}
extends by analytic continuation to the entire disk $\mathbb{D}$,
showing that $\eta_{\mu}$ maps $\mathbb{D}$ conformally onto a domain
$\Omega_{\mu}\subset\mathbb{D}.$ In fact, by the results in \cite{HZ-MathZ} (see
also  \cite[Section 5]{BWZ-mul} for a review), $\eta_{\mu}$ extends
to a homeomorphism of $\overline{\mathbb{D}}$ onto $\overline{\Omega_{\mu}}$.
In particular, $\eta_{\mu}$ maps $\mathbb{T}$ homeomorphically onto
$\partial\Omega_{\mu}$. The domain $\Omega_{\mu}$ is starlike, in
particular its boundary is a closed curve of the form
\[
\{R_{\mu}(\zeta)\zeta:\zeta\in\mathbb{T}\},
\]
where $R_{\mu}:\mathbb{T}\to(0,1]$ is a continuous function that
is real-analytic at all points $\zeta$ satisfying $R_{\mu}(\zeta)<1$.
Thus, one has 
\[
\Omega_{\mu}=\{r\zeta:\zeta\in\mathbb{T},0\le r<R_{\mu}(\zeta)\}.
\]
 For a $\boxtimes$-infinitely divisible measure $\mu$, there is
at most one point $\zeta_{0}$ such that $\eta_{\mu}(\zeta_{0})=1$,
and this happens precisely when $R_{\mu}(1)=1$. Outside the point
$\overline{\zeta_{0}}$, the measure $\mu$ is absolutely continuous
with a locally analytic density wherever this density is positive.
This is a consequence of the identity (\ref{eq:inversionT}) that
holds for $z\in\mathbb{D}$ close to a point $\xi_{0}\in\mathbb{T}$
such that $|\eta_{\mu}(\xi_{0})|<1$ because (as we also discuss in
the proof below) $\Phi'(\eta_{\mu}(\xi_{0}))\ne0$. 

Suppose that $k_{1}<k_{2}<\cdots$ is a sequence of natural numbers
and $\{\mu_{n,i}:n,i\in\mathbb{N},i\le k_{n}\}$ is an infinitesimal
array of Borel probability measures on $\mathbb{T}$, that is,
\[
\lim_{n\to\infty}\min_{1\le i\le k_{n}}\mu_{n,i}(\{\zeta\in\mathbb{T}:|\zeta-1|<\varepsilon\})=1
\]
for every $\varepsilon>0$. According to \cite{BB-CMS}, given an
arbitrary $\rho\in(0,1)$, we have $\rho_{\mu_{n,i}}>\rho$ for all
$1\leq i\leq k_{n}$ provided that $n$ is sufficiently large.

The weak convergence of probability measures on $\mathbb{T}$ can
be translated into local uniform convergence of the $\eta$-functions. If all the measures under consideration satisfy $\int_{\mathbb{T}}\zeta\,d\mu(\zeta)\ne0$
it also translates into uniform convergence of $\Phi$-functions on
$\rho\mathbb{D}$ for some $\rho>0$.

We are now ready to state the circle version of the superconvergence
result.
\begin{thm}
    \label{thm:multi T}
Let $\{\nu_{n}\}_{n\in\mathbb{N}}$ be a sequence of Borel probability
measures on $\mathbb{T}$ that converges weakly to an $\boxtimes$-infinitely
divisible measure $\nu$, where $\int_{\mathbb{T}}\zeta\,d\nu(\zeta)\ne0$.
Suppose that $\lim_{n\to\infty}\rho_{\nu_{n}}=1$. Let $J\subset\mathbb{T}$
be a compact arc such that $\nu$ is absolutely continuous and its
density is positive in a neighborhood of $J$. Then $\nu_{n}$ is
absolutely continuous with a real-analytic density on a neighborhood
of $J$ for $n$ sufficiently large, and the densities of $\nu_{n}$
converge uniformly on $J$, along with all their derivatives, to that
of $\nu$ as $n\to\infty$.
\end{thm}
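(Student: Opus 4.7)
The strategy I would follow parallels the proof of Theorem~\ref{thm:multi R+}, with the polar-coordinate argument replaced by a direct application of Hurwitz's theorem to the maps $\Phi_n(z) = z\Sigma_{\nu_n}(z)$ and $\Phi(z) = z\Sigma_\nu(z)$. The hypothesis $\rho_{\nu_n}\to 1$, together with the weak convergence $\nu_n\to\nu$, ensures that for every $\rho\in(0,1)$ the $\Phi_n$'s are defined on $\overline{\rho\mathbb{D}}$ for all large $n$ and converge there to $\Phi$ uniformly. A compactness argument will reduce matters to a single point $\xi_0$ in the neighborhood of $J$ at which $\nu$ has positive density; this is the local situation I would set up first.

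Next I would, as in the discussion preceding the theorem, write $w_0 = \eta_\nu(\bar\xi_0) = R_\nu(\zeta_0)\zeta_0$, so that $|w_0|<1$, $\Phi(w_0)=\bar\xi_0$, and $\Phi'(w_0)\ne 0$. Choosing $\delta>0$ small enough that $\overline{B(w_0,\delta)}\subset\mathbb{D}$ and $\Phi$ is injective on $\overline{B(w_0,\delta)}$, I would then invoke Hurwitz's theorem: since $\Phi_n\to\Phi$ uniformly on this closed disk and $\Phi'$ does not vanish there, for $n$ large $\Phi_n$ is injective on $B(w_0,\delta)$ and its image contains a fixed open neighborhood $V$ of $\bar\xi_0$. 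Denote the resulting analytic inverse by $\Psi_n\colon V\to B(w_0,\delta)$, and write $\Psi$ for the inverse of $\Phi$; uniform convergence of $\Phi_n$ forces $\Psi_n\to\Psi$ uniformly on $V$, and hence, via the Cauchy integral formula, uniformly with all derivatives on compacts of $V$.

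The part I expect to require most care---the main obstacle---is identifying $\Psi_n$ with the analytic continuation of $\eta_{\nu_n}$ across $V\cap\mathbb{T}$ rather than with some unrelated branch of $\Phi_n^{-1}$. To settle this I would pick any anchor point $z_0\in V\cap\mathbb{D}$ with $\eta_\nu(z_0)\in B(w_0,\delta/2)$; local uniform convergence of $\eta_{\nu_n}$ to $\eta_\nu$ on compacts of $\mathbb{D}$ then forces $\eta_{\nu_n}(z_0)\in B(w_0,\delta)$ for large $n$, and the equation $\Phi_n(\eta_{\nu_n}(z_0))=z_0=\Phi_n(\Psi_n(z_0))$ combined with the injectivity of $\Phi_n$ on $B(w_0,\delta)$ forces $\eta_{\nu_n}(z_0)=\Psi_n(z_0)$. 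The identity theorem then promotes this to an equality on the component of $V\cap\mathbb{D}$ containing $z_0$, so $\Psi_n$ genuinely continues $\eta_{\nu_n}$ across the arc $V\cap\mathbb{T}$. The boundary-value formula
\[
\frac{d\nu_n}{dm}(\xi) = \Re\frac{1+\Psi_n(\bar\xi)}{1-\Psi_n(\bar\xi)},
\]
well defined because $\Psi_n(\bar\xi)$ lies near $w_0\ne 1$, then yields absolute continuity of $\nu_n$ near $\xi_0$ with a real-analytic density, and the uniform convergence $\Psi_n\to\Psi$ together with its derivatives transfers verbatim to these densities.
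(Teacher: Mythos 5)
Your proposal is correct in substance, and it genuinely diverges from the paper's argument in its choice of technical tools. The paper proves local invertibility of $\Phi_n$ near $w_0=\eta_\nu(\bar\xi_0)$ by working in polar coordinates: it shows $\frac{\partial}{\partial r}\log|\Phi(re^{i\theta})|>0$ at $w_0$ (via Remark~\ref{rem:y-derivative again}), deduces the same for $\Phi_n$, and from this produces explicit curves $R_n$ describing where $|\Phi_n(r\zeta)|<1$. It then extends $\eta_{\nu_n}(\Phi_n(z))=z$ by analytic continuation along radial strips $\{r\zeta:|\zeta-e^{i\theta_0}|\le\varepsilon,\,r\le R_\nu(e^{i\theta_0})-\delta\}$, precisely where it knows $\Phi_n$ stays in $\mathbb{D}$. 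You instead go straight to $\Phi'(w_0)\ne0$, use a Hurwitz/argument-principle argument to get injectivity of $\Phi_n$ on a fixed disk $B(w_0,\delta)$ and a uniformly-sized image neighborhood $V$ of $\bar\xi_0$, and then match the local inverse $\Psi_n$ with (the continuation of) $\eta_{\nu_n}$ via an anchor point and the identity theorem. Working with the composition $\Phi_n\circ\eta_{\nu_n}$ rather than $\eta_{\nu_n}\circ\Phi_n$ is a real simplification, because it sidesteps having to control where $\Phi_n$ stays in $\mathbb{D}$. The price is that the paper's argument produces the explicit boundary curves $R_n\to R_\nu$, which is extra geometric information, and it runs in perfect parallel with the $\mathbb{R}_+$ proof; your route is more self-contained analytically but tailored to the disk setting.

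There is one step you should spell out: the identity $\Phi_n(\eta_{\nu_n}(z_0))=z_0$ for an anchor point $z_0$ that is \emph{close to $\mathbb{T}$}, not close to $0$. A priori the inversion relations defining $\Sigma_{\nu_n}$ only give $\Phi_n\circ\eta_{\nu_n}=\mathrm{id}$ near the origin. It does hold at your $z_0$, but the justification uses that $\eta_{\nu_n}(0)=0$ and $\eta_{\nu_n}(\mathbb{D})\subset\mathbb{D}$, so the Schwarz lemma gives $|\eta_{\nu_n}(z)|\le|z|$; hence for $|z|<\rho_{\nu_n}$ the composition $\Phi_n\circ\eta_{\nu_n}$ is defined on the \emph{connected} disk $\rho_{\nu_n}\mathbb{D}$, agrees with the identity near $0$, and therefore equals the identity there by the identity theorem. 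Since $\rho_{\nu_n}\to1$, eventually $|z_0|<\rho_{\nu_n}$. Without this Schwarz-lemma step, one cannot rule out that the inversion identity fails to continue out to $z_0$ (which is exactly the kind of obstruction the paper is navigating with its $R_n$ curves). Similarly, when you pass from $\eta_{\nu_n}(z_0)=\Psi_n(z_0)$ at the single anchor to equality on a component of $V\cap\mathbb{D}$, you should note that the same argument gives equality on a full neighborhood of $z_0$ (uniformly in large $n$), which is what the identity theorem actually needs. With these two small additions the proof is complete.
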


\begin{proof}
[Sketch of proof]We use the notation $\Phi=\Phi_{\nu}$ and $\Phi_{n}=\Phi_{\nu_{n}}$,
$n\in\mathbb{N}.$ Suppose that $\xi_{0}\in\mathbb{T}$ is such that
$\eta_{\nu}(\xi_{0})\in\mathbb{D}$. As in the proof of Theorem \ref{thm:multi R+},
we need to show that, for sufficiently large $n$, we have $\eta_{\nu_{n}}(z)\in\rho_{\nu_{n}}\mathbb{D}$
for $z\in\mathbb{D}$ close to $\xi_{0}$, and that $\Phi_{n}(\eta_{\nu_{n}}(z))=z$
for such $z$. By hypothesis, we may assume that $\rho_{\nu_{n}}>\rho>|\eta_{\nu}(\xi_{0})|$
for some $\rho\in(0,1),$ and that $|\Phi_{n}(z)|<1$ for $|z|<\rho$.
It is easily seen, by using logrithmic polar coordinate, from Remark
\ref{rem:y-derivative again} (see also  \cite[Section 3]{HZ-MathZ}
for a proof by explcit formulas) that
\[
\frac{\partial}{\partial r}\log\left|\Phi(re^{i\theta})\right|>0
\]
at the point $r_{0}e^{i\theta_{0}}=\eta_{\nu}(\xi_{0})$. Thus, we
may also assume that 
\[
\frac{\partial}{\partial r}\log\left|\Phi_{n}(re^{i\theta})\right|>0
\]
for $re^{i\theta}$ in a fixed neighborhood of $\eta_{\nu}(\xi_{0})$
and for large $n$. It follows that there is a sequence $\{R_{n}\}_{n\in\mathbb{N}}$
of functions defined in a neighborhood of $e^{i\theta_{0}}$, with
values in $(0,1)$, such that 
\[
|\Phi_{n}(r\zeta)|\begin{cases}
<1, & r<R_{n}(\zeta),\\
=1, & r=R_{n}(\zeta),\\
>1, & r>R_{n}(\zeta).
\end{cases}
\]
Moreover, the functions $R_{n}$ converge uniformly to $R_{\nu}$
in a neighborhood of $e^{i\theta_{0}}$. Then, we need to verify that
\[
\eta_{\nu_{n}}(\Phi_{n}(z))=z
\]
for $z=r\zeta$ provided that $r<R_{n}(\zeta)$ and $\zeta$ is close
to $e^{i\theta_{0}}$, say $|\zeta-e^{i\theta_{0}}|\le\varepsilon$.
This statement follows by analytic continuation because $\Phi_{n}$
converges uniformly to $\Phi$ in a compact set of the form
\[
\{r\zeta:|\zeta-e^{i\theta_{0}}|\le\varepsilon,r\le R_{\nu}(e^{i\theta_{0}})-\delta\},
\]
where $\varepsilon,\delta>0$. Absolute continuity of $\nu_{n}$ near
$1/\xi_{0}$, and the uniform convergence of the densities,  follow
as in the free additive case. 
\end{proof}

As in the additive case and the multiplicative case on $\mathbb{R}_+$, we have the following corollary that formalizes the discussion preceding Theorem~\ref{thm:multi T}.
\begin{cor}
    Suppose that $k_{1},k_{2},\dots\in\mathbb{N}$, and that $\{\mu_{n,i}:n,i\in\mathbb{N},i\le k_{n}\}$
is an infinitesimal array of Borel probability measures on $\mathbb{T}$. Set
\[\nu_n = \mu_{n,1}\boxtimes\cdots\boxtimes\nu_{n,k_n}.\]
Assume that $\nu_n$ converges to a $\boxtimes$-infinite divisible measure $\nu$ such that $\nu\ne m$. Then for any compact arc $J\subset \mathbb{T}$ such that $\nu$ is absolutely continuous 
and $d\nu/dx>0$ in a neighborhood of $J$, $\nu_n$ is absolutely continuous on a neighborhood of $J$ with a real-analytic density for sufficiently large $n$, 
and the densities $d\nu_n/dx$ converge uniformly on $J$ to $d\nu/dx$ as $n\to\infty$.
\end{cor}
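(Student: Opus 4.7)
The plan is to deduce this corollary as an immediate consequence of Theorem~\ref{thm:multi T}. The hypotheses in the corollary differ from those of the theorem only in two respects: the theorem requires (a) the condition $\int_{\mathbb{T}}\zeta\,d\nu(\zeta)\neq 0$, and (b) the asymptotic $\lim_{n\to\infty}\rho_{\nu_{n}}=1$. Both assertions will follow from the infinitesimal-array structure together with the assumption $\nu\neq m$; once they are verified, the theorem applies verbatim.

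For (a), I invoke the fact recalled just before Theorem~\ref{thm:multi T}: the normalized arclength measure $m$ is the \emph{only} $\boxtimes$-infinitely divisible measure on $\mathbb{T}$ with vanishing first moment. Since $\nu$ is $\boxtimes$-infinitely divisible and $\nu\neq m$ by hypothesis, it follows at once that $\int_{\mathbb{T}}\zeta\,d\nu(\zeta)\neq 0$.

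For (b), I rely on the result of \cite{BB-CMS} cited in the discussion preceding the theorem: for every $\rho\in(0,1)$, there exists $N$ such that $\rho_{\mu_{n,i}}>\rho$ for all $n\geq N$ and all $1\leq i\leq k_{n}$. Each factor $\Sigma_{\mu_{n,i}}$ is then analytic on $\rho\mathbb{D}$, and hence so is the product $\Sigma_{\nu_{n}}=\Sigma_{\mu_{n,1}}\cdots\Sigma_{\mu_{n,k_{n}}}$, giving $\rho_{\nu_{n}}\geq\rho$ for all $n\geq N$. Letting $\rho\uparrow 1$ yields $\liminf_{n\to\infty}\rho_{\nu_{n}}\geq 1$, which is precisely the content actually used in the proof of Theorem~\ref{thm:multi T} (the theorem's proof only needs $\rho_{\nu_{n}}>\rho$ eventually, for any chosen $\rho<1$).

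With (a) and (b) in hand, Theorem~\ref{thm:multi T} applied to the sequence $\{\nu_{n}\}_{n\in\mathbb{N}}$ and the compact arc $J$ delivers exactly the conclusions of the corollary: absolute continuity of $\nu_n$ on a neighborhood of $J$, real-analyticity of the density there, and uniform convergence of $d\nu_{n}/dx$ to $d\nu/dx$ on $J$. The only substantive step is the verification of (b), and I do not expect it to pose any obstacle, since it reduces to the multiplicative identity for $\Sigma$-transforms combined with the cited uniform control of the radii $\rho_{\mu_{n,i}}$ in an infinitesimal array.
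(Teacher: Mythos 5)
Your proposal is correct and takes the route the paper intends: the corollary is stated as ``formalizing the discussion preceding Theorem~\ref{thm:multi T},'' and no separate proof is written out, so the content is precisely the two reductions you perform. Item (a) follows, as you say, from the paper's observation that $m$ is the unique $\boxtimes$-infinitely divisible measure on $\mathbb{T}$ with vanishing first moment, so $\nu\neq m$ forces $\int_{\mathbb{T}}\zeta\,d\nu(\zeta)\neq 0$. Item (b) is the one substantive step, and your argument is sound: by \cite{BB-CMS} each $\rho_{\mu_{n,i}}>\rho$ for $n$ large, the multiplicativity $\Sigma_{\mu\boxtimes\mu'}=\Sigma_{\mu}\Sigma_{\mu'}$ on $|z|<\min\{\rho_{\mu},\rho_{\mu'}\}$ then propagates this bound inductively through the $k_n$-fold product, giving $\rho_{\nu_n}\geq\rho$ for $n$ large and hence $\liminf_n\rho_{\nu_n}\geq 1$. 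You are also right that this is all the proof of Theorem~\ref{thm:multi T} actually uses (it fixes one $\rho\in(|\eta_{\nu}(\xi_0)|,1)$ and only needs $\rho_{\nu_n}>\rho$ eventually), so the apparent mismatch between ``$\liminf\geq 1$'' and the theorem's stated hypothesis ``$\lim=1$'' is immaterial.
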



\begin{thebibliography}{10}
\bibitem[1]{AWZ}M. Anshelevich, J.-C. Wang, and P. Zhong, Local limit
theorems for multiplicative free convolutions, \emph{J. Funct. Anal.}
\textbf{267} (2014), 3469--3499. 

\bibitem[2]{BB-IMRN}S. T. Belinschi and H. Bercovici, Partially defined
semigroups relative to multiplicative free convolution, \emph{Int.
Math. Res. Not. IMRN} (2005), 65--101.

\bibitem[3]{BB-CMS}S. T. Belinschi and H. Bercovici, Hin\v{c}in's
theorem for multiplicative free convolution, \emph{Canad. Math. Bull.}
\textbf{51} (2008), 26--31.

\bibitem[4]{BP-PAMS}H. Bercovici and V. Pata, A free analogue of
Hin\v{c}in's characterization of infinite divisibility, \emph{Proc.
Amer. Math. Soc. }\textbf{128} (2000), 1011--1015.

\bibitem[5]{BVunbdd}H. Bercovici and D. Voiculescu, Free convolution
of measures with unbounded support, \emph{Indiana Univ. Math. J. }\textbf{42}
(1993), 733--773.

\bibitem[6]{BVsuper}H. Bercovici and D. Voiculescu, Superconvergence
to the central limit and failure of the Cram\'{e}r theorem for free
random variables, \emph{Probab. Theory Related Fields} \textbf{103}
(1995), 215--222.

\bibitem[7]{BWZ-Pacific}H. Bercovici, J.-C. Wang, and P. Zhong, Superconvergence
to freely infinitely divisible distributions, \emph{Pacific J. Math.
}\textbf{292} (2018), 273--290. 

\bibitem[8]{BWZ-mul}H. Bercovici, J.-C. Wang, and P. Zhong, Superconvergence
and regularity of densities in free probability, preprint (2021).

\bibitem[9]{CG-arXiv}G. P. Chistyakov, and F. G\"otze, Rate of convergence in the entropic free CLT, preprint (2011). 

\bibitem[10]{CG-PTRF}G. P. Chistyakov, and F. G\"otze, Asymptotic expansions in the CLT in free probability, \emph{Probab. Theory Related Fields
}\textbf{157} (2013), 107--156. 

\bibitem[11]{Huang}H.-W. Huang, Supports of measures in a free additive
convolution semigroup, \emph{Int. Math. Res. Not. IMRN} (2015), 4269--4292.

\bibitem[12]{HZ-MathZ}H.-W. Huang and P. Zhong, On the supports of
measures in free multiplicative convolution semigroups, \emph{Math.
Z.} \textbf{278} (2014), 321--345.

\bibitem[13]{K1} V. Kargin,  On superconvergence of sums of free random variables \emph{Ann. Probab.} \textbf{35} (2007),  1931?1949.

\bibitem[14]{K2} V. Kargin,  The norm of products of free random variables, \emph{Probab. Theory Related Fields} \textbf{139} (2007),  397?413.

\bibitem[15]{V-1983}D. Voiculescu, Symmetries of some reduced free
product $C^{*}$-algebras, \emph{Operator algebras and their connections
with topology and ergodic theory} (\emph{Bu\c{s}teni}, 1983), 556--588,
\emph{Lecture Notes in Math.}, \textbf{1132}, Springer, Berlin, 1985.
\end{thebibliography}
\end{document}